\providecommand{\U}[1]{\protect\rule{.1in}{.1in}}
\newtheorem{theorem}{Theorem}
\newtheorem{corollary}[theorem]{Corollary}
\newtheorem{definition}[theorem]{Definition}
\newtheorem{lemma}[theorem]{Lemma}
\newenvironment{proof}[1][Proof]{\noindent\textbf{#1.} }{\ \rule{0.5em}{0.5em}}
\begin{document}

\title{Multivariate Alexander colorings}
\author{Lorenzo Traldi\\Lafayette College\\Easton Pennsylvania 18042, United States
}
\date{ }
\maketitle

\begin{abstract}
We extend the notion of link colorings with values in an Alexander quandle to link colorings with values in a module $M$ over the Laurent polynomial ring $\Lambda_{\mu}=\mathbb{Z}[t_1^{\pm1},\dots,t_{\mu}^{\pm1}]$. If $D$ is a diagram of a link $L$ with $\mu$ components, then the colorings of $D$ with values in $M$ form a $\Lambda_{\mu}$-module $\mathrm{Color}_A(D,M)$. Extending a result of Inoue [Kodai Math.\ J.\ 33 (2010), 116-122], we show that $\mathrm{Color}_A(D,M)$ is isomorphic to the module of $\Lambda_{\mu}$-linear maps from the Alexander module of $L$ to $M$. In particular, suppose $M$ is a field and $\varphi:\Lambda_{\mu} \to M$ is a homomorphism of rings with unity. Then $\varphi$ defines a $\Lambda_{\mu}$-module structure on $M$, which we denote $M_\varphi$. We show that the dimension of $\mathrm{Color}_A(D,M_\varphi)$ as a vector space over $M$ is determined by the images under $\varphi$ of the elementary ideals of $L$. This result applies in the special case of Fox tricolorings, which correspond to $M=GF(3)$ and $\varphi(t_i) \equiv-1$. Examples show that even in this special case, the higher Alexander polynomials do not suffice to determine $|\mathrm{Color}_A(D,M_\varphi)|$; this observation corrects erroneous statements of Inoue [J. Knot Theory Ramifications 10 (2001), 813-821; op. cit.].

\end{abstract}

\section{Introduction}

This paper is concerned with link invariants defined from diagrams. We use standard notation and terminology: A (tame, classical) \emph{link} $L=K_1 \cup \dots \cup K_{\mu}$ has $\mu$ disjoint components, each of which is a \emph{knot}, i.e., a piecewise smooth copy of $\mathbb{S}^1$ in $\mathbb{S}^3$. A \emph{diagram} $D$ of $L$ in the plane is obtained from a projection with only finitely many singularities, all of which are double points called \emph{crossings}. At each crossing, $D$ distinguishes the underpassing component by removing two short segments, one on each side of the crossing. Removing these segments splits $D$ into a finite number of arc components. The set of arc components of $D$ is denoted $A(D)$, and the set of crossings of $D$ is denoted $C(D)$. We also use standard notation for rings of Laurent polynomials with integer coefficients, $\Lambda=\mathbb{Z}[t^{\pm1}]$ and $\Lambda_{\mu}=\mathbb{Z}[t_1^{\pm1},\dots,t_{\mu}^{\pm1}]$.

The idea of a \emph{quandle} or \emph{distributive groupoid} was introduced in the 1980s by Joyce \cite{J} and Matveev \cite{M}. In the intervening decades a sizable literature has developed, involving many different generalizations and special cases of the quandle idea. In this paper we generalize one of these special cases. 

\begin{definition}
\label{aquandle}
An \emph{Alexander quandle} is a module $M$ over the ring $\Lambda$. The quandle operation is given by
\[
a_2 \triangleright a_1=(1-t) \cdot a_1 + t \cdot a_2.
\]
\end{definition}

Notice that for an Alexander quandle, the quandle operation is determined by the addition and scalar multiplication operations of the module. As we do not refer to any non-Alexander quandles in this paper, we use notation and terminology for modules rather than quandles. For instance, the following definition is equivalent to the definition of Alexander quandle colorings in the literature, even though the definition does not include the word ``quandle.''

\begin{definition}
\label{aquandlecolor}
Let $D$ be a link diagram, and $M$ a $\Lambda$-module. An \emph{Alexander coloring} of $D$ with values in $M$ is given by a function $f:A(D) \to M$ such that at every crossing $c$ as indicated in Figure~\ref{crossfig}, the following equation is satisfied:
\[
f(a_3)=(1-t) \cdot f(a_1) + t \cdot f(a_2).
\]
\end{definition}

\begin{figure}
\centering
\begin{tikzpicture} [>=angle 90]
\draw [thick] [->] (1,1) -- (-1,-1);
\draw [thick] (-1,1) -- (-.2,0.2);
\draw [thick] (0.2,-0.2) -- (1,-1);
\node at (1.3,1.3) {$a_1$};
\node at (-1.3,1.3) {$a_2$};
\node at (1.3,-1.3) {$a_3$};
\end{tikzpicture}
\caption{The arcs incident at a crossing.}
\label{crossfig}
\end{figure}
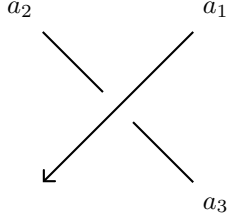

Here is a multivariate version of Definition~\ref{aquandlecolor}.

\begin{definition}
\label{aquandlecolormultiv}
Let $D$ be a diagram of a link $L=K_1 \cup \dots \cup K_{\mu}$, and let $M$ be a module over the ring $\Lambda_{\mu}$. Let $\kappa:A(D) \to \{1,\dots,\mu\}$ be the map with $\kappa(a)=i$ if and only if $a$ is an arc of $K_i$. Then a \emph{multivariate Alexander coloring} of $D$ with values in $M$ is given by a function $f:A(D) \to M$ such that at every crossing $c$ as indicated in Figure~\ref{crossfig}, the following equation is satisfied:
\[
f(a_3)=(1-t_{\kappa(a_2)}) \cdot f(a_1) + t_{\kappa(a_1)} \cdot f(a_2).
\]
The set of all multivariate Alexander colorings of $D$ with values in $M$ is denoted $\mathrm{Color}_A(D,M)$.
\end{definition}

Here are several remarks about these definitions.

1. Definition \ref{aquandlecolormultiv} includes Definition \ref{aquandlecolor}. If $M$ is a $\Lambda$-module then $M$ is also a $\Lambda_{\mu}$-module, with $t_i \cdot m = t \cdot m$ $\forall m \in M$ $\forall i \in \{1,\dots,\mu\}$. In particular, there is no difference between Definitions \ref{aquandlecolor} and \ref{aquandlecolormultiv} when $\mu=1$.

2. When we refer to Definition \ref{aquandlecolor} we sometimes use the phrase ``standard Alexander coloring'' to emphasize that we are not discussing Definition \ref{aquandlecolormultiv}.

3. Definition \ref{aquandlecolormultiv} does not seem to be associated with a notion of ``multivariate Alexander quandles'' analogous to the notion of standard Alexander quandles. There is no quandle structure on $M$ because $\kappa$ is defined on $A(D)$, not $M$.

4. Nosaka has pointed out that he mentioned the possibility of defining link colorings in $\Lambda_{\mu}$-modules in \cite[Remark 2.6]{N}. This idea was also mentioned by Manturov and Ilyutko \cite[Theorem 3.14]{Ma}, in the more general context of virtual links. These authors did not develop the results we present below, though.

5. For each $m \in M$, the constant function $f(a)=m$ satisfies Definition \ref{aquandlecolor} and the nonconstant function $f(a)=(1-t_{\kappa(a)}) \cdot m$ satisfies Definition \ref{aquandlecolormultiv}.

6. $\mathrm{Color}_A(D,M)$ is itself a module over $\Lambda_{\mu}$, using pointwise addition and scalar multiplication. That is, if $f_1,f_2 \in \mathrm{Color}_A(D,M)$ and $\lambda \in \Lambda_{\mu}$ then $(f_1+f_2)(a) = f_1(a)+f_2(a) \text{  and  } (\lambda \cdot f_1)(a) = \lambda \cdot f_1(a)$ $\forall a \in A(D)$.


Before stating results, we briefly recall some basic information about Alexander modules. We refer to the literature for more thorough discussions of these famous invariants of classical links \cite{BZ, CF, F, H}.

Each oriented link diagram $D$ has an associated \emph{Alexander matrix} $M(D)$. The columns of $M(D)$ are indexed by $A(D)$, and the rows of $M(D)$ are indexed by $C(D)$. Suppose $c$ is a crossing with the incident arcs indexed as in Figure~\ref{crossfig}. (N.b. The underpassing arcs are indexed using the orientation of $a_1$: $a_2$ is on the right side of an observer facing forward on $a_1$, and $a_3$ is on the left side.) If $a_2 \neq a_3$, then the row of $M(D)$ corresponding to $c$ has these entries:

\[
M(D)_{ca}=
\begin{cases}
1-t_{\kappa(a_2)}\text{,} & \text{if }a=a_1 \\
t_{\kappa(a_1)}\text{,} & \text{if }a=a_2 \\
-1\text{,} & \text{if }a=a_3 \\
0\text{,} & \text{if }a \notin\{a_1,a_2,a_3\}
\end{cases}
\]
If $a_2 = a_3$, then the row of $M(D)$ corresponding to $c$ has these entries:

\[
M(D)_{ca}=
\begin{cases}
1-t_{\kappa(a_2)}\text{,} & \text{if }a=a_1 \\
t_{\kappa(a_1)}-1\text{,} & \text{if }a=a_2=a_3 \\
0\text{,} & \text{if }a \notin\{a_1,a_2\}
\end{cases}
\]
The reader familiar with the free differential calculus will recognize that the entries of the $c$ row of $M(D)$ are the images in $\Lambda_{\mu}$ of the free derivatives of the Wirtinger relator $a_1a_2a_1^{-1}a_3^{-1}$ corresponding to the crossing $c$.

\begin{definition}
\label{almod}
If $D$ is a diagram of $L$ then the \emph{Alexander module} $M_A(L)$ is the $\Lambda_{\mu}$-module presented by $M(D)$.
\end{definition}

That is to say, if $D$ is a diagram of $L$ and $\Lambda_{\mu}^{A(D)}$ is the free $\Lambda_{\mu}$-module on the set $A(D)$, then $M_A(L)$ is isomorphic to the quotient of $\Lambda_{\mu}^{A(D)}$ by the submodule $S$ generated by all elements of the form
\[
(1-t_{\kappa(a_2)}) \cdot a_1 + t_{\kappa(a_1)} \cdot a_2 - a_3
\]
where the arcs $a_1,a_2,a_3$ appear at a crossing of $D$ as in Figure \ref{crossfig}.

If $M$ is a $\Lambda_{\mu}$-module and $f:A(D) \to M$ is an arbitrary function, then $f$ defines a $\Lambda_{\mu}$-linear map $\widehat{f}:\Lambda_{\mu}^{A(D)} \to M$. This map $\widehat{f}$ defines a $\Lambda_{\mu}$-linear map with domain $M_A(L)$ if and only if $S \subseteq \ker(\widehat{f})$. We deduce the following result, which we call the Fundamental Theorem of Alexander colorings.

\begin{theorem}
\label{main1}
Let $D$ be a diagram of $L=K_1 \cup \dots \cup K_{\mu}$, and let $M$ be a module over $\Lambda_{\mu}$. If $M_A(L)$ is the Alexander module of $L$, then
\[
\mathrm{Color}_A(D,M) \cong \mathrm{Hom}_{\Lambda_{\mu}}(M_A(L),M) .
\]
\end{theorem}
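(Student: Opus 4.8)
The plan is to make precise the observation already sketched in the paragraph preceding the theorem: a function $f \colon A(D) \to M$ is the same data as a $\Lambda_\mu$-linear map $\widehat f \colon \Lambda_\mu^{A(D)} \to M$, and the coloring conditions on $f$ are exactly the conditions needed for $\widehat f$ to factor through the quotient $M_A(L) = \Lambda_\mu^{A(D)}/S$. So I would first set up the natural bijection between $\operatorname{Map}(A(D), M)$ and $\operatorname{Hom}_{\Lambda_\mu}(\Lambda_\mu^{A(D)}, M)$ coming from the universal property of the free module: extending $f$ $\Lambda_\mu$-linearly gives $\widehat f$, and restricting a linear map to the basis $A(D)$ recovers $f$. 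This bijection is visibly $\Lambda_\mu$-linear with respect to the pointwise module structure on $\operatorname{Map}(A(D),M)$ described in Remark~6 and the usual module structure on $\operatorname{Hom}$, so it is an isomorphism of $\Lambda_\mu$-modules.

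Next I would identify the submodule $\operatorname{Color}_A(D,M) \subseteq \operatorname{Map}(A(D),M)$ with the submodule of $\operatorname{Hom}_{\Lambda_\mu}(\Lambda_\mu^{A(D)}, M)$ consisting of those $g$ with $S \subseteq \ker g$. For one direction: if $f$ is a coloring, then for each crossing $c$ with incident arcs $a_1, a_2, a_3$ as in Figure~\ref{crossfig}, the defining equation $f(a_3) = (1 - t_{\kappa(a_2)}) f(a_1) + t_{\kappa(a_1)} f(a_2)$ says precisely that $\widehat f$ kills the generator $(1 - t_{\kappa(a_2)}) a_1 + t_{\kappa(a_1)} a_2 - a_3$ of $S$; since these elements generate $S$ and $\widehat f$ is linear, $S \subseteq \ker \widehat f$. (I should note that the degenerate case $a_2 = a_3$ is handled uniformly: the generator of $S$ becomes $(1 - t_{\kappa(a_2)}) a_1 + (t_{\kappa(a_1)} - 1) a_2$, which is exactly what the coloring equation gives when $a_2 = a_3$, consistent with the two cases in the description of $M(D)$.) Conversely, if $\widehat f$ kills all these generators, reading the same equations backwards shows $f$ is a coloring. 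Thus the bijection of the previous paragraph restricts to a $\Lambda_\mu$-linear bijection $\operatorname{Color}_A(D,M) \to \{\, g \in \operatorname{Hom}_{\Lambda_\mu}(\Lambda_\mu^{A(D)},M) : S \subseteq \ker g \,\}$.

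Finally I would invoke the standard fact that for the quotient map $\pi \colon \Lambda_\mu^{A(D)} \to \Lambda_\mu^{A(D)}/S = M_A(L)$, precomposition with $\pi$ gives a $\Lambda_\mu$-linear isomorphism
\[
\operatorname{Hom}_{\Lambda_\mu}(M_A(L), M) \xrightarrow{\ \sim\ } \{\, g \in \operatorname{Hom}_{\Lambda_\mu}(\Lambda_\mu^{A(D)}, M) : S \subseteq \ker g \,\},
\]
this being just the universal property of the cokernel (equivalently, left-exactness of $\operatorname{Hom}_{\Lambda_\mu}(-,M)$ applied to $S \hookrightarrow \Lambda_\mu^{A(D)} \twoheadrightarrow M_A(L)$). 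Composing the two isomorphisms yields $\operatorname{Color}_A(D,M) \cong \operatorname{Hom}_{\Lambda_\mu}(M_A(L), M)$, as claimed. There is no real obstacle here — the result is essentially a repackaging of the definition of $M_A(L)$ as the module presented by $M(D)$ — so the only thing that needs care is bookkeeping: checking that every identification in sight respects the $\Lambda_\mu$-module structures (in particular the pointwise one on colorings), and confirming that the $a_2 = a_3$ crossings do not require separate treatment. I would present the argument as a short chain of natural isomorphisms with those two points explicitly verified.
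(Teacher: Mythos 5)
Your proposal is correct and follows essentially the same route as the paper, which proves the theorem by exactly this observation: a function $f:A(D)\to M$ extends to $\widehat f:\Lambda_{\mu}^{A(D)}\to M$, and $\widehat f$ factors through $M_A(L)=\Lambda_{\mu}^{A(D)}/S$ precisely when $f$ satisfies the coloring equations. You simply spell out the bookkeeping (the $\Lambda_{\mu}$-linearity of the identifications and the $a_2=a_3$ case) that the paper leaves implicit.
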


Many authors have discussed the fact that standard Alexander colorings are connected to the Alexander module, or to the Alexander polynomials \cite{B, EN, HHO, I, I2, J, KL, L, M, N}. In particular, Inoue \cite{I2} stated the following version of the fundamental theorem for standard Alexander colorings. Inoue's result involves the \emph{reduced} Alexander module $M_A^{red}(L)$, i.e., the $\Lambda$-module presented by a matrix obtained from an Alexander matrix $M(D)$ by replacing $t_1,\dots,t_{\mu}$ with a single variable, $t$. 

\begin{corollary} (\cite{I2})
\label{maincor1}
Let $D$ be a diagram of a link $L$, and let $M$ be a $\Lambda$-module. Then the $\Lambda$-module of Alexander colorings of $D$ with values in $M$ is isomorphic to $\mathrm{Hom}_{\Lambda}(M_A^{red}(L),M)$.
\end{corollary}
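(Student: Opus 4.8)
The plan is to derive Corollary~\ref{maincor1} from Theorem~\ref{main1} by reconciling two potential mismatches: the multivariate Alexander module $M_A(L)$ versus the reduced module $M_A^{red}(L)$, and the multivariate coloring condition of Definition~\ref{aquandlecolormultiv} versus the standard coloring condition of Definition~\ref{aquandlecolor}. The key observation (already recorded in Remark~1 following Definition~\ref{aquandlecolormultiv}) is that a $\Lambda$-module $M$ becomes a $\Lambda_\mu$-module via the ring homomorphism $\rho:\Lambda_\mu \to \Lambda$ sending each $t_i \mapsto t$, and under this structure the multivariate coloring equation $f(a_3) = (1-t_{\kappa(a_2)})\cdot f(a_1) + t_{\kappa(a_1)}\cdot f(a_2)$ collapses exactly to the standard equation $f(a_3) = (1-t)\cdot f(a_1) + t\cdot f(a_2)$. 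Hence $\mathrm{Color}_A(D,M)$, computed with the $\Lambda_\mu$-structure on $M$, is literally the same set as the set of standard Alexander colorings of $D$ with values in $M$, and the identification respects the module operations once we note that scalar multiplication by $\lambda \in \Lambda$ on the coloring module corresponds to multiplication by any preimage in $\Lambda_\mu$.

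Next I would apply Theorem~\ref{main1} to get $\mathrm{Color}_A(D,M) \cong \mathrm{Hom}_{\Lambda_\mu}(M_A(L),M)$ as $\Lambda_\mu$-modules, and then identify the right-hand side with $\mathrm{Hom}_\Lambda(M_A^{red}(L),M)$. For this, the main point is that $M_A^{red}(L) = \Lambda \otimes_{\Lambda_\mu} M_A(L)$, i.e., the reduced module is the base change of the Alexander module along $\rho$. This follows from the definition: $M_A(L)$ is presented over $\Lambda_\mu$ by the matrix $M(D)$, so $\Lambda \otimes_{\Lambda_\mu} M_A(L)$ is presented over $\Lambda$ by the matrix obtained from $M(D)$ by applying $\rho$ to every entry, which is precisely the defining presentation of $M_A^{red}(L)$ given in the excerpt (replace each $t_i$ by $t$). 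The extension–of–scalars adjunction then yields
\[
\mathrm{Hom}_{\Lambda_\mu}(M_A(L),M) \cong \mathrm{Hom}_{\Lambda_\mu}(M_A(L), \mathrm{Hom}_\Lambda(\Lambda,M)) \cong \mathrm{Hom}_\Lambda(\Lambda \otimes_{\Lambda_\mu} M_A(L), M) \cong \mathrm{Hom}_\Lambda(M_A^{red}(L),M),
\]
where I am using that $M$, viewed as a $\Lambda_\mu$-module, is the restriction of scalars of $M$ as a $\Lambda$-module, i.e., $M = \mathrm{Hom}_\Lambda(\Lambda, M)$ with its natural $\Lambda_\mu$-action through $\rho$.

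An equivalent and perhaps more transparent route, which I might present instead of or alongside the adjunction argument, is a direct universal-property check: a $\Lambda_\mu$-linear map $M_A(L) \to M$ is the same as a $\Lambda_\mu$-linear map $\Lambda_\mu^{A(D)} \to M$ killing the Wirtinger submodule $S$; since the $\Lambda_\mu$-action on $M$ factors through $\rho$, every $\Lambda_\mu$-linear map out of a free module is automatically $\Lambda$-linear for the induced structure and is determined by an arbitrary function $A(D) \to M$; and the condition of killing the generators of $S$ becomes, after applying $\rho$, exactly the condition of killing the generators of the reduced relator submodule. Chasing the module operations through this correspondence shows it is $\Lambda$-linear (with $\Lambda$ acting on the left side through any lift to $\Lambda_\mu$), giving the stated isomorphism.

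I do not anticipate a serious obstacle here; the content is entirely formal once Theorem~\ref{main1} is in hand. The one point requiring a little care is bookkeeping about module structures: one must be consistent about the fact that $\mathrm{Color}_A(D,M)$ a priori carries a $\Lambda_\mu$-action (Remark~6), that this action factors through $\rho$ when $M$ does, and hence descends to the asserted $\Lambda$-module structure compatibly with the $\Lambda$-structure on $\mathrm{Hom}_\Lambda(M_A^{red}(L),M)$. Making that compatibility explicit — rather than merely exhibiting a bijection of underlying sets — is the only step that needs to be written out with any attention.
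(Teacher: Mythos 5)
Your argument is correct and is essentially the route the paper intends: the paper states Corollary~\ref{maincor1} without a written proof, but your base-change-plus-adjunction argument is exactly the specialization of Lemma~\ref{adj} and Lemma~\ref{pres} to $R=\Lambda$ and $\varphi=\rho$ (where $\rho(t_i)=t$), under which $\Lambda_\rho \otimes_{\Lambda_\mu} M_A(L)$ is presented by $\rho(M(D))$ and hence is $M_A^{red}(L)$. Your bookkeeping of the module structures (the $\Lambda_\mu$-action factoring through $\rho$, as in Remark~1) is also handled correctly.
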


There is no easily computable set of complete invariants for modules over $\Lambda$ and $\Lambda_{\mu}$, so these modules can be difficult to work with. Theorem~\ref{main1} and Corollary~\ref{maincor1} yield more convenient results when $M$ is both a module over $\Lambda_{\mu}$ and a vector space over a field, because a vector space is characterized up to isomorphism by its dimension. Before stating results we recall a standard definition of classical knot theory.

\begin{definition}
Let $D$ be a diagram of a link $L=K_1 \cup \dots \cup K_{\mu}$. Then the \emph{elementary ideals} $E_j(L)$ are ideals of $\Lambda_{\mu}$, indexed by $j \geq 0 \in \mathbb{N}$.
\begin{itemize}
\item If $j \geq |A(D)|$, then $E_j(L)=\Lambda_{\mu}$. 
\item If $|A(D)|>j\geq |A(D)|-|C(D)|$, then $E_j(L)$ is the ideal of $\Lambda_{\mu}$ generated by the determinants of $(|A(D)|-j) \times (|A(D)|-j)$ submatrices of $M(D)$. 
\item If $j<|A(D)|-|C(D)|$, then $E_j(L)=(0)$.
\end{itemize}
\end{definition}

The elementary ideals of links have been studied thoroughly; see \cite{H} for a detailed account of the theory.

Suppose $F$ is a field, $\varphi:\Lambda_{\mu} \to F$ is a homomorphism of rings with unity, and $M$ is a vector space over $F$. Let $M_\varphi$ denote the $\Lambda_{\mu}$-module obtained from $M$ using $\varphi$. (That is, if $\lambda \in \Lambda_{\mu}$ and $m \in M$ then $\lambda \cdot m = \varphi(\lambda) \cdot m$.) In Section 2 we prove the following.

\begin{theorem}
\label{main2}
Let $F$ be a field, let $\varphi:\Lambda_{\mu} \to F$ be a homomorphism of rings with unity, and let $M$ be a vector space over $F$. Let $L$ be a $\mu$-component link, and let $j_0$ be the smallest index with $\varphi(E_{j_0}(L)) \neq 0$. Then for any diagram $D$ of $L$,
\[
\mathrm{Color}_A(D,M_\varphi) \cong \mathrm{Hom}_F(F^{j_0},M)_\varphi.
\]
It follows that $\mathrm{Color}_A(D,M_\varphi)$ is a vector space over $F$ of dimension $j_0 \cdot dim_F(M)$.
\end{theorem}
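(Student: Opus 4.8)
The plan is to combine Theorem~\ref{main1} with standard facts about presentation matrices and elementary ideals over the field $F$, transporting everything along $\varphi$. First I would invoke Theorem~\ref{main1} to identify $\mathrm{Color}_A(D,M_\varphi)$ with $\mathrm{Hom}_{\Lambda_\mu}(M_A(L),M_\varphi)$. The key observation is that a $\Lambda_\mu$-linear map from $M_A(L)$ to $M_\varphi$ factors through the base change: $\mathrm{Hom}_{\Lambda_\mu}(M_A(L),M_\varphi) \cong \mathrm{Hom}_F(F \otimes_{\Lambda_\mu} M_A(L), M)$, where $F$ is viewed as a $\Lambda_\mu$-algebra via $\varphi$. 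Since $M_A(L)$ is presented by the Alexander matrix $M(D)$ (with entries in $\Lambda_\mu$), the base-changed module $F \otimes_{\Lambda_\mu} M_A(L)$ is the $F$-vector space presented by the matrix $\varphi(M(D))$ obtained by applying $\varphi$ entrywise. So the whole problem reduces to: what is $\dim_F(\mathrm{coker}\,\varphi(M(D)))$, and then dualize.

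Next I would compute that cokernel dimension via the rank of $\varphi(M(D))$ over the field $F$. Write $n = |A(D)|$ for the number of columns. Then $\dim_F \mathrm{coker}\,\varphi(M(D)) = n - \mathrm{rank}_F \varphi(M(D))$. The rank of a matrix over a field is $r$ iff all $(r+1)\times(r+1)$ minors vanish but some $r\times r$ minor does not; equivalently, $n - \mathrm{rank}_F \varphi(M(D))$ equals the largest $k$ such that every $k\times k$ minor of $\varphi(M(D))$ is zero --- wait, I should phrase this in terms of the elementary-ideal indexing used in the paper. The $j$-th elementary ideal $E_j(L)$ is generated by the $(n-j)\times(n-j)$ minors of $M(D)$ (in the relevant range), and $\varphi$ carries these generators to the $(n-j)\times(n-j)$ minors of $\varphi(M(D))$, so $\varphi(E_j(L))$ is exactly the ideal of $F$ generated by the $(n-j)\times(n-j)$ minors of $\varphi(M(D))$. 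Since $F$ is a field, $\varphi(E_j(L)) \neq 0$ iff $\varphi(M(D))$ has a nonzero $(n-j)\times(n-j)$ minor iff $\mathrm{rank}_F \varphi(M(D)) \geq n-j$. Therefore the smallest $j_0$ with $\varphi(E_{j_0}(L)) \neq 0$ satisfies $n - j_0 = \mathrm{rank}_F \varphi(M(D))$, i.e.\ $\dim_F \mathrm{coker}\,\varphi(M(D)) = j_0$. (One must check the boundary cases of the definition of $E_j$: if $j < n - |C(D)|$ then $E_j(L)=(0)$ and indeed $\varphi(M(D))$, having only $|C(D)|$ rows, cannot have rank $> |C(D)|$, so its cokernel has dimension $\geq n-|C(D)|$; and $E_j(L)=\Lambda_\mu$ once $j \geq n$, consistent with $\mathrm{rank} \geq 0$ always. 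So $j_0$ is well-defined and lies in the expected range.)

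Putting the pieces together: $F \otimes_{\Lambda_\mu} M_A(L) \cong F^{j_0}$ as $F$-vector spaces, hence
\[
\mathrm{Color}_A(D,M_\varphi) \cong \mathrm{Hom}_F(F^{j_0},M),
\]
and restoring the $\Lambda_\mu$-module structure (which acts on $M$ through $\varphi$) gives the stated isomorphism $\mathrm{Color}_A(D,M_\varphi) \cong \mathrm{Hom}_F(F^{j_0},M)_\varphi$. The dimension count $\dim_F \mathrm{Hom}_F(F^{j_0},M) = j_0 \cdot \dim_F(M)$ is then immediate.

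I expect the main obstacle to be bookkeeping rather than conceptual: namely, verifying carefully that applying $\varphi$ entrywise to a presentation matrix computes the presentation matrix of the base-changed module (this is the right-exactness of $F \otimes_{\Lambda_\mu} -$ applied to the presentation $\Lambda_\mu^{C(D)} \to \Lambda_\mu^{A(D)} \to M_A(L) \to 0$), and that $\varphi$ of a determinant is the determinant of the image matrix (so that $\varphi$ sends minor generators to minor generators and hence $\varphi(E_j(L))$ is generated by the minors of $\varphi(M(D))$). The one genuinely substantive point requiring the field hypothesis is the equivalence ``ideal generated by $k\times k$ minors is nonzero $\iff$ rank $\geq k$,'' which fails over general rings but is exactly the classical characterization of matrix rank over a field; I would state this as a lemma or cite it. Everything else --- Hom-tensor adjunction, the dimension formula for $\mathrm{Hom}_F$ --- is routine linear algebra.
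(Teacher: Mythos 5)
Your proposal is correct and follows essentially the same route as the paper: the fundamental theorem plus the Hom--tensor adjunction to reduce to $\mathrm{Hom}_F(F_\varphi \otimes_{\Lambda_\mu} M_A(L),M)$, right-exactness of the tensor product to see that $\varphi(M(D))$ presents the base-changed module, and the minor/rank characterization over a field (with functoriality of determinants under $\varphi$) to identify its dimension with $j_0$. The boundary-case check on the indexing of $E_j$ is a nice touch the paper leaves implicit; otherwise the arguments coincide step for step.
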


In case $M=F$, we have the following.

\begin{corollary}
\label{maincor2}
Let $F$ be a field, and let $\varphi:\Lambda_{\mu} \to F$ be a homomorphism of rings with unity. Let $L$ be a $\mu$-component link, and let $j_0$ be the smallest index with $\varphi(E_{j_0}(L)) \neq 0$. Then for any diagram $D$ of $L$,
\[
\mathrm{Color}_A(D,F_\varphi) \cong \mathrm{Hom}_F(F^{j_0},F)_\varphi.
\]
It follows that $\mathrm{Color}_A(D,F_\varphi)$ is a vector space over $F$ of dimension $j_0$.
\end{corollary}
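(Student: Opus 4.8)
The plan is to deduce Corollary~\ref{maincor2} as the special case $M=F$ of Theorem~\ref{main2}, since Theorem~\ref{main2} is already stated (and proved in Section~2) before this point. First I would set $M=F$ in the statement of Theorem~\ref{main2}. This immediately yields the isomorphism $\mathrm{Color}_A(D,F_\varphi) \cong \mathrm{Hom}_F(F^{j_0},F)_\varphi$, which is exactly the displayed isomorphism in the corollary.

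Next I would compute the dimension. By Theorem~\ref{main2}, $\mathrm{Color}_A(D,M_\varphi)$ is a vector space over $F$ of dimension $j_0 \cdot \dim_F(M)$; substituting $M=F$ gives $\dim_F(F)=1$, hence the dimension is $j_0 \cdot 1 = j_0$. Alternatively, one can see this directly: $\mathrm{Hom}_F(F^{j_0},F)$ is the dual space of $F^{j_0}$, which has dimension $j_0$, and tensoring/twisting by $\varphi$ does not change the underlying $F$-vector-space structure, only the $\Lambda_\mu$-action. Either way the conclusion $\dim_F \mathrm{Color}_A(D,F_\varphi) = j_0$ follows.

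Since this is a direct specialization of an already-established theorem, there is essentially no obstacle; the only thing to be careful about is that the twisted module $\mathrm{Hom}_F(F^{j_0},F)_\varphi$ retains its dimension $j_0$ as an $F$-vector space regardless of the twist, which is immediate because the twist affects only the $\Lambda_\mu$-module structure and leaves the $F$-scalar multiplication untouched. Thus the proof is a one-line invocation of Theorem~\ref{main2} with $M=F$.

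\begin{proof}[Proof of Corollary~\ref{maincor2}]
Apply Theorem~\ref{main2} with $M=F$, regarded as a one-dimensional vector space over itself. This gives $\mathrm{Color}_A(D,F_\varphi) \cong \mathrm{Hom}_F(F^{j_0},F)_\varphi$, and the dimension is $j_0 \cdot \dim_F(F) = j_0$.
\end{proof}
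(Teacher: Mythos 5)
Your proposal is correct and matches the paper exactly: the paper introduces Corollary~\ref{maincor2} with the phrase ``In case $M=F$, we have the following,'' i.e., it is obtained precisely by specializing Theorem~\ref{main2} to $M=F$, with the dimension $j_0\cdot\dim_F(F)=j_0$ following immediately. Your additional remark that the $\varphi$-twist changes only the $\Lambda_\mu$-action and not the underlying $F$-vector-space dimension is a correct and harmless clarification.
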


If $\varphi(t_i)=\varphi(t_j)$ $\forall i,j$, then the colorings described in Corollary \ref{maincor2} are standard Alexander colorings. These colorings have been studied by Kauffman and Lopes \cite{KL}, who refer to them as colorings by linear Alexander quandles. The most familiar instances are the Fox colorings, which correspond to homomorphisms with $\varphi(t_i)=-1$ $\forall i$.

As far as we know, the precise statement of Corollary \ref{maincor2} has not appeared before, although a version of the special case for Fox colorings was announced recently \cite{STW}. Inoue \cite{I, I2} stated a similar result for standard Alexander colorings, with the elementary ideals replaced by the higher Alexander polynomials. In Section 3 we show that Inoue's version of Corollary \ref{maincor2} is incorrect even in the simplest case, i.e., Fox colorings of knots with $F=GF(3)$, the field of three elements.

After discussing examples in Sections 3 -- 5, we outline the extension of Theorem \ref{main2} from fields to principal ideal domains in Section 6.

\section{Proof of Theorem \ref{main2}}

Our proof of Theorem \ref{main2} begins with two lemmas, which provide useful properties of tensor products in conjunction with ring homomorphisms. Full accounts of the general theory of tensor products may be found in standard algebra texts, like \cite{La}. 

If $\varphi:\Lambda_{\mu} \to R$ is a homomorphism of commutative rings with unity and $M$ is an $R$-module, then we denote by $M_\varphi$ the $\Lambda_{\mu}$-module on $M$ with $\lambda \cdot m = \varphi(\lambda) \cdot m$ for $\lambda \in \Lambda_{\mu}$ and $m \in M$.


\begin{lemma} \label{adj} 
Let $\varphi:\Lambda_{\mu} \to R$ be a homomorphism of commutative rings with unity, and let $M$ be an $R$-module. If $D$ is a diagram of a $\mu$-component link $L$, then
\[
\mathrm{Color}_A(D,M_\varphi) \cong \mathrm{Hom}_{R}(R_\varphi \otimes_{\Lambda_{\mu}} \hspace{-0.25 em} M_A(L),M)_\varphi.
\]
\end{lemma}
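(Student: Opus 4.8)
The plan is to combine Theorem~\ref{main1} with the tensor--hom adjunction. Theorem~\ref{main1} already gives $\mathrm{Color}_A(D,M_\varphi) \cong \mathrm{Hom}_{\Lambda_{\mu}}(M_A(L),M_\varphi)$, where the Hom is computed in the category of $\Lambda_{\mu}$-modules; note that $\mathrm{Color}_A(D,M_\varphi)$ is automatically a $\Lambda_{\mu}$-module via the pointwise action (Remark~6), and one checks that the isomorphism of Theorem~\ref{main1} is $\Lambda_{\mu}$-linear, so it is really an isomorphism of $\Lambda_{\mu}$-modules. Thus the content of the lemma is the identification
\[
\mathrm{Hom}_{\Lambda_{\mu}}(M_A(L),M_\varphi) \cong \mathrm{Hom}_{R}(R_\varphi \otimes_{\Lambda_{\mu}} M_A(L),\,M)_\varphi .
\]

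For this I would invoke the standard change-of-rings adjunction: for a ring homomorphism $\varphi:\Lambda_{\mu}\to R$, the extension-of-scalars functor $R\otimes_{\Lambda_{\mu}}(-)$ is left adjoint to the restriction-of-scalars functor $(-)_\varphi$. Concretely, for any $\Lambda_{\mu}$-module $N$ and any $R$-module $M$ there is a natural isomorphism
\[
\mathrm{Hom}_{R}(R\otimes_{\Lambda_{\mu}} N,\,M) \cong \mathrm{Hom}_{\Lambda_{\mu}}(N,\,M_\varphi),
\]
sending an $R$-linear map $g$ to the $\Lambda_{\mu}$-linear map $n\mapsto g(1\otimes n)$, with inverse sending a $\Lambda_{\mu}$-linear map $h$ to the $R$-linear map determined by $r\otimes n\mapsto r\cdot h(n)$. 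Apply this with $N=M_A(L)$. This is a textbook fact (see \cite{La}), so I would cite it rather than reprove it; the only thing to spell out is that both sides carry compatible $\Lambda_{\mu}$-module structures. On the left, $\mathrm{Hom}_{\Lambda_{\mu}}(M_A(L),M_\varphi)$ is a $\Lambda_{\mu}$-module because $M_\varphi$ is; on the right, $\mathrm{Hom}_{R}(R_\varphi\otimes_{\Lambda_{\mu}}M_A(L),M)$ is an $R$-module (since $M$ is), hence a $\Lambda_{\mu}$-module after restricting scalars along $\varphi$, which is exactly the subscript $\varphi$ in the statement. One verifies that the adjunction isomorphism above respects these structures: if $\lambda\in\Lambda_{\mu}$, then $(\lambda\cdot h)(n)=\varphi(\lambda)\cdot h(n)$ on the left corresponds to $g\mapsto \varphi(\lambda)\cdot g$ on the right, which is precisely the restricted $R$-action. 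So the adjunction isomorphism is $\Lambda_{\mu}$-linear.

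Putting the two steps together chains the isomorphisms
\[
\mathrm{Color}_A(D,M_\varphi)\;\cong\;\mathrm{Hom}_{\Lambda_{\mu}}(M_A(L),M_\varphi)\;\cong\;\mathrm{Hom}_{R}(R_\varphi \otimes_{\Lambda_{\mu}} M_A(L),\,M)_\varphi,
\]
which is the claim. I do not expect a serious obstacle here: the two ingredients are Theorem~\ref{main1} (already available) and the change-of-rings adjunction (standard). The only point requiring a little care is the bookkeeping of module structures — making sure the $\Lambda_{\mu}$-action appearing on $\mathrm{Color}_A(D,M_\varphi)$, the one on the middle Hom-module, and the restricted-scalars action on the right-hand Hom-module all match up under the isomorphisms. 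I would state this verification explicitly but briefly, since it is routine, and that is the step most likely to need an extra sentence or two to get the naturality right.
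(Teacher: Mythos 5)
Your proposal is correct and follows essentially the same route as the paper: apply Theorem~\ref{main1} and then the change-of-rings (tensor--hom) adjunction $\mathrm{Hom}_{R}(R_\varphi \otimes_{\Lambda_{\mu}} M_A(L),M)_\varphi \cong \mathrm{Hom}_{\Lambda_{\mu}}(M_A(L),M_\varphi)$, citing Lang. The only difference is that you spell out the adjunction maps and the $\Lambda_{\mu}$-module bookkeeping explicitly, which the paper leaves implicit.
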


\begin{proof}
The isomorphism 
\begin{equation}
\label{isom}
\mathrm{Hom}_{R}(R_\varphi \otimes_{\Lambda_{\mu}} \hspace{-0.25 em} M_A(L),M)_\varphi \cong \mathrm{Hom}_{\Lambda_{\mu}}(M_A(L),M_\varphi)
\end{equation}
is a special case of the general property that $\mathrm{Hom}$ and $\otimes$ define adjoint functors. This particular type of adjointness is mentioned (for instance) by Lang \cite[p. 637]{La}. The lemma follows from (\ref{isom}) and the fundamental theorem.
\end{proof}

\begin{lemma} \label{pres} Suppose $D$ is a diagram of a $\mu$-component link $L$ and $\varphi:\Lambda_{\mu} \to R$ is a homomorphism of commutative rings with unity. Then $\varphi(M(D))$ is a presentation matrix for the $R$-module $R_\varphi \otimes_{\Lambda_{\mu}} \hspace{-0.25 em} M_A(L)$.
\end{lemma}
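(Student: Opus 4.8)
The plan is to use the right-exactness of the tensor product functor $R_\varphi \otimes_{\Lambda_\mu} (-)$ applied to a finite free presentation of the Alexander module. Recall that $M_A(L)$ is, by Definition~\ref{almod}, the cokernel of the map $\Lambda_\mu^{C(D)} \xrightarrow{M(D)^{\mathsf T}} \Lambda_\mu^{A(D)}$ (thinking of $M(D)$ as acting on column vectors indexed by $A(D)$, so that each row of $M(D)$ gives a relator); that is, there is an exact sequence
\[
\Lambda_\mu^{C(D)} \xrightarrow{\ M(D)\ } \Lambda_\mu^{A(D)} \longrightarrow M_A(L) \longrightarrow 0.
\]

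First I would apply the functor $R_\varphi \otimes_{\Lambda_\mu} (-)$ to this sequence. Since tensoring is right exact, it preserves cokernels, so we obtain an exact sequence
\[
R_\varphi \otimes_{\Lambda_\mu} \Lambda_\mu^{C(D)} \longrightarrow R_\varphi \otimes_{\Lambda_\mu} \Lambda_\mu^{A(D)} \longrightarrow R_\varphi \otimes_{\Lambda_\mu} M_A(L) \longrightarrow 0.
\]
Next I would identify the two free terms: for any index set $X$ there is a natural isomorphism $R_\varphi \otimes_{\Lambda_\mu} \Lambda_\mu^{X} \cong R^X$ of $R$-modules, because tensor product commutes with direct sums and $R_\varphi \otimes_{\Lambda_\mu} \Lambda_\mu \cong R$. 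Under these identifications, the induced map $R^{C(D)} \to R^{A(D)}$ is exactly the one whose matrix is obtained by applying $\varphi$ entrywise to $M(D)$, i.e.\ it is $\varphi(M(D))$; this is because the map $M(D)\colon \Lambda_\mu^{C(D)} \to \Lambda_\mu^{A(D)}$ sends the basis vector $e_c$ to $\sum_{a} M(D)_{ca}\, e_a$, and tensoring sends $1\otimes e_c$ to $\sum_a \varphi(M(D)_{ca})\,e_a$. Hence the exact sequence reads
\[
R^{C(D)} \xrightarrow{\ \varphi(M(D))\ } R^{A(D)} \longrightarrow R_\varphi \otimes_{\Lambda_\mu} M_A(L) \longrightarrow 0,
\]
which is precisely the statement that $\varphi(M(D))$ is a presentation matrix for the $R$-module $R_\varphi \otimes_{\Lambda_\mu} M_A(L)$.

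The only mild subtlety — and the step I would be most careful about — is the bookkeeping that the matrix representing the tensored map is literally the entrywise image $\varphi(M(D))$, with rows and columns still indexed by $C(D)$ and $A(D)$ in the same way; this is where one must be sure the identification $R_\varphi \otimes_{\Lambda_\mu}\Lambda_\mu^X \cong R^X$ is the natural one sending $1\otimes e_x \mapsto e_x$, so that no transpose or reindexing is silently introduced. Everything else is a direct appeal to right-exactness of $\otimes$ and the compatibility of $\otimes$ with finite (indeed arbitrary) direct sums, both of which are standard and may be cited from \cite{La}.
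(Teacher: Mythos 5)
Your proof is correct and follows essentially the same route as the paper: apply the right-exact functor $R_\varphi \otimes_{\Lambda_{\mu}} (-)$ to the presentation $\Lambda_{\mu}^{C(D)} \to \Lambda_{\mu}^{A(D)} \to M_A(L) \to 0$, identify $R_\varphi \otimes_{\Lambda_{\mu}} \Lambda_{\mu}^{S} \cong R^{S}$, and observe that the induced map is represented by $\varphi(M(D))$. The extra care you take with the basis identification $1 \otimes e_x \mapsto e_x$ is exactly the point the paper also flags (``with respect to the natural bases'').
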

\begin{proof} As $M(D)$ is a presentation matrix for $M_A(L)$, there is an exact sequence 
\[
\Lambda_{\mu}^{C(D)} \xrightarrow{f} \Lambda_{\mu}^{A(D)} \xrightarrow{g} M_A(L) \xrightarrow{} 0 \text{,}
\]
where $f$ is the homomorphism represented by the matrix $M(D)$. A standard property of tensor products is the fact that for any set $S$,
\[
R_\varphi \otimes_{\Lambda_{\mu}} \hspace{-0.25 em} \Lambda_{\mu}^{S} \cong R^S \text{}
\]
with $1 \otimes s$ corresponding to $s$ for each $s \in S$. Moreover, if $\mathrm{id}$ is the identity map of $R$ then the homomorphism
\[
\mathrm{id} \otimes f: R_\varphi \otimes_{\Lambda_{\mu}} \hspace{-0.25 em} \Lambda_{\mu}^{C(D)} \to R_\varphi \otimes_{\Lambda_{\mu}} \hspace{-0.25 em} \Lambda_{\mu}^{A(D)}
\] 
is represented by the matrix $\varphi(M(D))$, with respect to the natural bases.

Another standard property of tensor products is right exactness. This property guarantees that
\[
R_\varphi \otimes_{\Lambda_{\mu}} \hspace{-0.25 em} \Lambda_{\mu}^{C(D)} \xrightarrow{\mathrm{id}\otimes f} R_\varphi \otimes_{\Lambda_{\mu}} \hspace{-0.25 em} \Lambda_{\mu}^{A(D)} \xrightarrow{\mathrm{id}\otimes g} R_\varphi \otimes_{\Lambda_{\mu}} \hspace{-0.25 em} M_A(L) \xrightarrow{} 0
\]
is an exact sequence of $R$-modules. It follows that $\varphi(M(D))$ is a presentation matrix for $R_\varphi \otimes_{\Lambda_{\mu}} \hspace{-0.25 em} M_A(L)$. 
\end{proof}

\begin{corollary}
\label{lastcor}
Suppose $L$ is a $\mu$-component link, $F$ is a field and $\varphi:\Lambda_{\mu} \to F$ is a homomorphism of rings with unity. Let $j_0$ be the smallest integer with $\varphi(E_{j_0}(L)) \neq 0$. Then $j_0$ is the dimension of $F_\varphi \otimes_{\Lambda_{\mu}} \hspace{-0.25 em} M_A(L)$ as a vector space over $F$.
\end{corollary}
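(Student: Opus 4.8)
The plan is to read off the answer from Lemma~\ref{pres} together with the elementary relationship, over a field, between minors of a matrix and its rank. First, fix a diagram $D$ of $L$ and set $P=\varphi(M(D))$, a matrix over $F$ whose rows are indexed by $C(D)$ and whose columns by $A(D)$. By Lemma~\ref{pres}, $P$ is a presentation matrix for the vector space $V:=F_{\varphi}\otimes_{\Lambda_{\mu}}M_A(L)$, so $V$ is the quotient of $F^{A(D)}$ by the row space of $P$, and hence $\dim_F V=|A(D)|-\operatorname{rank}_F(P)$.

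Next I would check that $\varphi$ is compatible with the formation of elementary ideals. Because $\varphi$ is a ring homomorphism, $\varphi(\det N)=\det(\varphi(N))$ for every square submatrix $N$ of $M(D)$, and $\varphi(N)$ is the corresponding submatrix of $P$. Consequently, for every index $j$ with $|A(D)|>j\ge|A(D)|-|C(D)|$, the image $\varphi(E_j(L))$ and the ideal of $F$ generated by the $(|A(D)|-j)\times(|A(D)|-j)$ minors of $P$ coincide; in particular one of them is $0$ exactly when the other is. The two degenerate ranges match up on both sides as well: for $j\ge|A(D)|$ we have $E_j(L)=\Lambda_{\mu}$ and, treating a $0\times0$ minor of $P$ as $1$, the corresponding ideal of $F$ is also all of $F$; for $j<|A(D)|-|C(D)|$ we have $E_j(L)=(0)$ and $P$ has no square submatrix of the required size, so the corresponding ideal of $F$ is $0$.

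Finally I would invoke the standard fact that, over a field, a matrix has a nonzero $k\times k$ minor if and only if its rank is at least $k$. Taking $k=|A(D)|-j$, the smallest $j$ for which the ideal of $F$ generated by the $(|A(D)|-j)\times(|A(D)|-j)$ minors of $P$ is nonzero equals $|A(D)|-\operatorname{rank}_F(P)$. Stringing the three steps together yields $j_0=|A(D)|-\operatorname{rank}_F(P)=\dim_F V$. (The index $j_0$ exists and does not depend on $D$ because the ideals $E_j(L)$ are link invariants and $E_j(L)=\Lambda_{\mu}$ whenever $j\ge|A(D)|$, so $\varphi(E_j(L))=F\neq0$ for all large $j$.)

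I do not expect a genuine obstacle here: the proof is essentially bookkeeping built on Lemma~\ref{pres}. The one place to be careful is keeping distinct the two notions of ``elementary ideal'' in play --- the ideal $E_j(L)\subseteq\Lambda_{\mu}$ of the link, and the ideal of $F$ generated by minors of the one specific matrix $P$ --- and verifying that the special index ranges in the definition of $E_j(L)$ correspond exactly to the trivial cases of the minor condition for $P$.
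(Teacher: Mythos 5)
Your proposal is correct and follows essentially the same route as the paper's proof: apply Lemma~\ref{pres} to get $\varphi(M(D))$ as a presentation matrix, use the fact that an $m\times n$ presentation matrix of rank $r$ presents a vector space of dimension $n-r$, and identify the rank via functoriality of determinants with the minor/elementary-ideal condition defining $j_0$. Your extra bookkeeping on the degenerate index ranges is a slight elaboration the paper leaves implicit, but the argument is the same.
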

\begin{proof}
If $D$ is a diagram of $L$ then Lemma \ref{pres} tells us that $\varphi(M(D))$ is a presentation matrix for the $F$-vector space $F_\varphi \otimes_{\Lambda_{\mu}} \hspace{-0.25 em} M_A(L)$. For a vector space, the only isomorphism-invariant information provided by a presentation matrix is the dimension: an $m \times n$ matrix of rank $r$ is a presentation matrix for a vector space of dimension $n-r$.

The rank $r$ of $\varphi(M(D))$ is the size of the largest square submatrix with nonzero determinant. Determinants are functorial, in the sense that every square $\Lambda_{\mu}$-matrix $X$ has $\varphi(\mathrm{det}X)=\mathrm{det}(\varphi(X))$. It follows that the rank $r$ of $\varphi(M(D))$ is the largest size of a square submatrix $X$ of $M(D)$ with $\varphi(\mathrm{det}X) \neq 0$. If $j_0$ is the smallest index with $\varphi(E_{j_0}(L)) \neq 0$ then the largest size of a square submatrix $X$ of $M(D)$ with $\varphi(\mathrm{det}X) \neq 0$ is $|A(D)|-j_0$, so
\[
\mathrm{dim}_F(F_\varphi \otimes_{\Lambda_{\mu}} \hspace{-0.25 em} M_A(L))=|A(D)|-r=|A(D)|-(|A(D)|-j_0)=j_0.
\]
\end{proof}

Theorem \ref{main2} follows from Lemma \ref{adj} and Corollary \ref{lastcor}.

\section{Two knots}

Inoue~\cite{I} asserted that ``the number of all quandle homomorphisms of a knot quandle to an Alexander quandle is completely determined by Alexander polynomials of the knot.'' Corollary \ref{maincor2} implies a similar assertion, with `Alexander polynomials' replaced by `elementary ideals.' In this section we observe that for Fox tricolorings of the knots pictured in Figure \ref{knotsfig}, Corollary \ref{maincor2} is correct and Inoue's assertion is incorrect.

\begin{figure} [bht]
\centering
\begin{tikzpicture} [scale=0.6]
\draw [thick] (-13+1,0+0.5) -- (-13+1,1+0.5);
\draw [thick] (-13+1,4+0.5) -- (-13+1,3+0.5);
\draw [->] [>=angle 90] [thick] (-13+1,0+0.5) -- (-13+5.5,0+0.5);
\draw [thick] (-13+10,0+0.5) -- (-13+5.5,0+0.5);
\draw [thick] (-13+1,4+0.5) -- (-13+10,4+0.5);
\draw [thick] (-13+1,1+0.5) -- (-13+1.8,1.8+0.5);
\draw [thick] (-13+2.2,2.2+0.5) -- (-13+3,3+0.5);
\draw [thick] (-13+3,3+0.5) -- (-13+5,1+0.5);
\draw [thick] (-13+1,3+0.5) -- (-13+3,1+0.5);
\draw [thick] (-13+3,1+0.5) -- (-13+3.8,1.8+0.5);
\draw [thick] (-13+4.2,2.2+0.5) -- (-13+5,3+0.5);
\draw [thick] (-13+7,1+0.5) -- (-13+5,3+0.5);
\draw [thick] (-13+5,1+0.5) -- (-13+5.8,1.8+0.5);
\draw [thick] (-13+6.2,2.2+0.5) -- (-13+7,3+0.5);
\draw [thick] (-13+9,1+0.5) -- (-13+7,3+0.5);
\draw [thick] (-13+7,1+0.5) -- (-13+7.8,1.8+0.5);
\draw [thick] (-13+8.2,2.2+0.5) -- (-13+9,3+0.5);
\draw [thick] (-13+9,3+0.5) -- (-13+11,3+0.5);
\draw [thick] (-13+9.8,1+0.5) -- (-13+9,1+0.5);
\draw [thick] (-13+10,0+0.5) -- (-13+10,2.8+0.5);
\draw [thick] (-13+10,4+0.5) -- (-13+10,3.2+0.5);
\draw [thick] (-13+11,3+0.5) -- (-13+11,1+0.5);
\draw [thick] (-13+10.2,1+0.5) -- (-13+11,1+0.5);
\node at (-13+1.5,1+0.1) {$u$};
\node at (-13+1.5,3.4+0.5) {$v$};
\node at (-13+3,3.4+0.5) {$w$};
\node at (-13+5,3.4+0.5) {$x$};
\node at (-13+7,3.4+0.5) {$y$};
\node at (-13+10.7,2+0.5) {$z$};

\draw [->] [>=angle 90] [thick] (0,0) -- (4,0);
\draw [thick] (8,0) -- (4,0);
\draw [thick] (0,5) -- (8,5);
\draw [thick] (0,5) -- (0,3);
\draw [thick] (0,0) -- (0,2);
\draw [thick] (2,2) -- (0,2);
\draw [thick] (1,2.2) -- (1,2.8);
\draw [thick] (2,3) -- (0,3);
\draw [thick] (1,1) -- (1,1.8);
\draw [thick] (1,3.2) -- (1,4);
\draw [thick] (4,1) -- (4,4);
\draw [thick] (3,4) -- (4,4);
\draw [thick] (3,1) -- (4,1);
\draw [thick] (1.5,3) -- (0,3);
\draw [thick] (2,3) -- (2.3,3.3);
\draw [thick] (2.7,3.7) -- (3,4);
\draw [thick] (2,2) -- (2.3,1.7);
\draw [thick] (3,1) -- (2.7,1.3);
\draw [thick] (2,1) -- (3,2);
\draw [thick] (2,1) -- (1,1);
\draw [thick] (3,2) -- (3.8,2);
\draw [thick] (5,2) -- (4.2,2);
\draw [thick] (2,4) -- (1,4);
\draw [thick] (2,4) -- (3,3);
\draw [thick] (3.8,3) -- (3,3);
\draw [thick] (4.2,3) -- (5,3);
\draw [thick] (5,3) -- (6,2);
\draw [thick] (6,3) -- (7,2);
\draw [thick] (7,3) -- (8,2);
\draw [thick] (6,2) -- (6.3,2.3);
\draw [thick] (7,3) -- (6.7,2.7);
\draw [thick] (5,2) -- (5.3,2.3);
\draw [thick] (6,3) -- (5.7,2.7);
\draw [thick] (7,2) -- (7.3,2.3);
\draw [thick] (8,3) -- (7.7,2.7);
\draw [thick] (8,3) -- (8,5);
\draw [thick] (8,2) -- (8,0);
\node at (7.6,0.6) {$a$};
\node at (7.6,4.4) {$b$};
\node at (7,1.6) {$c$};
\node at (4.8,3.4) {$d$};
\node at (4.8,1.6) {$e$};
\node at (3.5,4.4) {$g$};
\node at (1.5,4.4) {$h$};
\node at (1.5,0.6) {$i$};
\node at (0.7,2.5) {$j$};
\end{tikzpicture}
\caption{The knots $6_1$ and $9_{46}$.}
\label{knotsfig}
\end{figure}
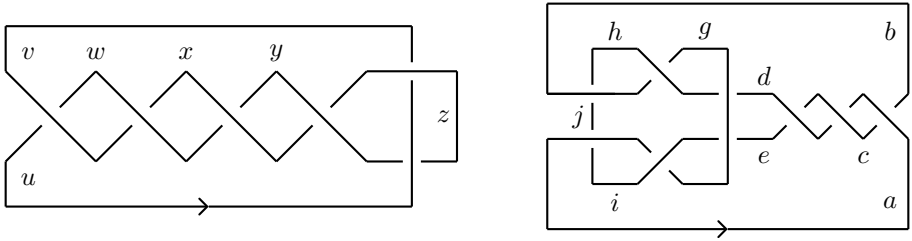

The Alexander polynomials and elementary ideals of the knots $6_1$ and $9_{46}$ were calculated by Crowell and Fox~\cite[Chapter VIII, Examples (4.5) and (4.6)]{CF}. The two knots have the same Alexander polynomials: $\Delta_1=2t^2-5t+2$ and $\Delta_k=1$ for $k>1$. Both knots also have $E_k=(\Delta_k)$ for $k \neq 2$. For $6_1$, $E_2=(\Delta_2)=(1)$ but for $9_{46}$, $E_2=(2-t,1-2t)$. Notice that if $GF(3)$ is the field of three elements then the homomorphism $\varphi:\Lambda \to GF(3)$ with $\varphi(t)=-1$ has
\[
\varphi(2t^2-5t+2)=\varphi(2-t)=\varphi(1-2t)=0.
\]
We see that with respect to this homomorphism $\varphi$, $6_1$ has $j_0=2$ and $9_{46}$ has $j_0=3$.

A Fox tricoloring \cite[Exercise VI.6]{CF} of a link diagram $D$ is a function $f:A(D) \to GF(3)$. At each crossing as in Figure \ref{crossfig}, the sum $f(a_1)+f(a_2)+f(a_3)$ must be $0$ in $GF(3)$. (This is simply the requirement that the coloring satisfies Definition \ref{aquandlecolor}, with $M=GF(3)_\varphi$.) We leave it to the reader to verify the following descriptions of the spaces of Fox tricolorings of $6_1$ and $9_{46}$.

\begin{itemize}
\item Every Fox tricoloring of $6_1$ is given by arbitrary values of $f(u)$ and $f(v)$ in $GF(3)$, with $f(w)=f(z)=-f(u)-f(v)$, $f(x)=f(u)$, and $f(y)=f(v)$.

\item Every Fox tricoloring of $9_{46}$ is given by arbitrary values of $f(a),f(b)$ and $f(g)$ in $GF(3)$, with $f(c)=-f(a)-f(b)$, $f(d)=f(b)$, $f(e)=f(a)$, $f(h)=-f(b)-f(g)$, $f(i)=-f(a)-f(g)$ and $f(j)=f(g)$.

\end{itemize}

It follows that the space of Fox tricolorings of $6_1$ has dimension $2$ over $GF(3)$, and the space of Fox tricolorings of $9_{46}$ has dimension $3$ over $GF(3)$. We see that $6_1$ and $9_{46}$ have different numbers of Fox tricolorings, even though all of their Alexander polynomials are the same.

\section{Two links}

In this section we apply Corollary \ref{maincor2} to the torus link $T_{(2,8)}$ and Whitehead's link $W$, pictured in Figure \ref{linksfig}. 

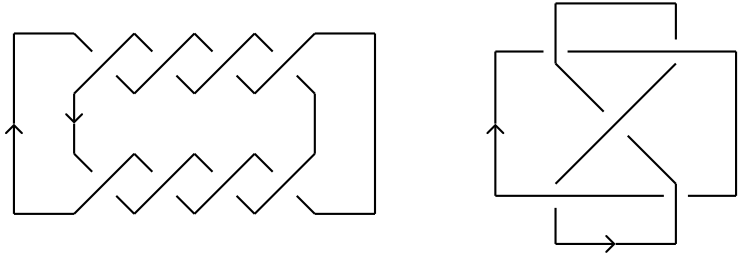
\begin{figure} [bht]
\centering
\begin{tikzpicture}  [scale=0.8]
\draw [->] [>=angle 90] [thick] (0,0+0.5) -- (0,2);
\draw [thick] (1,1+0.5) -- (1,2);
\draw [thick] (1,1+0.5) -- (1.3,0.7+0.5);
\draw [thick] (1.7,0.3+0.5) -- (2,0+0.5);
\draw [thick] (2,1+0.5) -- (2.3,0.7+0.5);
\draw [thick] (2.7,0.3+0.5) -- (3,0+0.5);
\draw [thick] (3,1+0.5) -- (3.3,0.7+0.5);
\draw [thick] (3.7,0.3+0.5) -- (4,0+0.5);
\draw [thick] (4,1+0.5) -- (4.3,0.7+0.5);
\draw [thick] (4.7,0.3+0.5) -- (5,0+0.5);
\draw [thick] (0,0+0.5) -- (1,0+0.5);
\draw [thick] (2,1+0.5) -- (1,0+0.5);
\draw [thick] (3,1+0.5) -- (2,0+0.5);
\draw [thick] (4,1+0.5) -- (3,0+0.5);
\draw [thick] (5,1+0.5) -- (4,0+0.5);
\draw [thick] (5,4-0.5) -- (6,4-0.5);
\draw [thick] (6,0+0.5) -- (6,4-0.5);
\draw [thick] (6,0+0.5) -- (5,0+0.5);
\draw [thick] (0,4-0.5) -- (0,2);
\draw [thick] (0,4-0.5) -- (1,4-0.5);
\draw [->] [>=angle 90] [thick] (1,3-0.5) -- (1,2);
\draw [thick] (1,4-0.5) -- (1.3,3.7-0.5);
\draw [thick] (1.7,3.3-0.5) -- (2,3-0.5);
\draw [thick] (2,4-0.5) -- (2.3,3.7-0.5);
\draw [thick] (2.7,3.3-0.5) -- (3,3-0.5);
\draw [thick] (3,4-0.5) -- (3.3,3.7-0.5);
\draw [thick] (3.7,3.3-0.5) -- (4,3-0.5);
\draw [thick] (4,4-0.5) -- (4.3,3.7-0.5);
\draw [thick] (4.7,3.3-0.5) -- (5,3-0.5);
\draw [thick] (5,1+0.5) -- (5,3-0.5);
\draw [thick] (2,4-0.5) -- (1,3-0.5);
\draw [thick] (3,4-0.5) -- (2,3-0.5);
\draw [thick] (4,4-0.5) -- (3,3-0.5);
\draw [thick] (5,4-0.5) -- (4,3-0.5);
\draw [->] [>=angle 90] [thick] (8,0.8) -- (8,2);
\draw [thick] (8,3.2) -- (8,2);
\draw [thick] (8,3.2) -- (8.8,3.2);
\draw [thick] (9.2,3.2) -- (12,3.2);
\draw [thick] (8,0.8) -- (10.8,0.8);
\draw [thick] (12,0.8) -- (11.2,0.8);
\draw [thick] (12,0.8) -- (12,3.2);
\draw [thick] (9,1) -- (11,3);
\draw [thick] (11,1) -- (10.2,1.8);
\draw [thick] (9,3) -- (9.8,2.2);
\draw [thick] (11,1) -- (11,0);
\draw [->] [>=angle 90] [thick] (9,0) -- (10,0);
\draw [thick] (10,0) -- (11,0);
\draw [thick] (9,0) -- (9,0.6);
\draw [thick] (9,3) -- (9,4);
\draw [thick] (11,4) -- (9,4);
\draw [thick] (11,4) -- (11,3.4);

\end{tikzpicture}
\caption{$T_{(2,8)}$ and Whitehead's link.}
\label{linksfig}
\end{figure}

With the indicated orientations, the elementary ideals of these links are $E_j(T_{(2,8)})=E_j(W)=\Lambda_2$ for $j>1$, $E_j(T_{(2,8)})=E_j(W)=(0)$ for $j<1$, $E_1(W)=(\Delta_1(W))\cdot(t_1-1,t_2-1)=(t_{1}-1)(t_{2}-1)\cdot(t_1-1,t_2-1)$, and
\[
E_1(T_{(2,8)})=(\Delta_1(T_{(2,8)}))\cdot(t_1-1,t_2-1)=(t_{1}^{3}+t_{1}^{2}t_{2}+t_{1}t_{2}^{2}+t_{2}^{3})\cdot(t_1-1,t_2-1).
\]
The elementary ideals may be confirmed using the Alexander matrices obtained from Figure \ref{linksfig}, as described in the introduction. The Alexander polynomials $\Delta_1(T_{(2,8)})$ and $\Delta_1(W)$ may also be verified on the LinkInfo website \cite{Cha}, where the two links are labeled L5a1\{1\} and L8a14\{1\}.

\begin{table} [bht]
\centering
\begin{tabular} {cccc}
$\varphi(t_1)$ & $\varphi(t_2)$ & $j_0(T_{(2,8)})$ & $j_0(W)$ \\[5 pt]
1 & 1 & 2 & 2 \\
1 & -1 & 2 & 2 \\
-1 & -1 & 1 & 1
\end{tabular}
\caption{Values of $j_0$ for homomorphisms $\varphi:\Lambda_2 \to GF(3)$.}
\label{tabl1}
\end{table}

Both links have $E_0=(0)$ and $E_2=\Lambda_2$, so both links have $j_0 \in \{1,2\}$ for every instance of Corollary \ref{maincor2}; if $F$ is a field then a ring homomorphism $\varphi:\Lambda_2 \to F$ yields $j_0=2$ if and only if $\varphi(E_1)=(0)$. Table \ref{tabl1} gives the $j_0$ values for homomorphisms $\varphi:\Lambda_2 \to GF(3)$. (All of the elementary ideals of both links are symmetric with respect to the transposition $t_1 \leftrightarrow t_2$, so we do not need to list the homomorphism with $\varphi(t_1)=-1$ and $\varphi(t_2)=1$; it yields the same $j_0$ values as the homomorphism with $\varphi(t_1)=1$ and $\varphi(t_2)=-1$.) We see that $\textrm{Color}_A(T_{(2,8)},GF(3)_\varphi) \cong \textrm{Color}_A(W,GF(3)_\varphi)$ for every homomorphism of rings with unity $\varphi:\Lambda_2 \to GF(3)$.

The $j_0$ values for homomorphisms $\varphi:\Lambda_2 \to GF(5)$ appear in Table \ref{tabl2}. In the first four rows, we see that $\textrm{Color}_A(T_{(2,8)},GF(5)_\varphi) \cong \textrm{Color}_A(W,GF(5)_\varphi)$ for every homomorphism of rings with unity $\varphi:\Lambda_2 \to GF(5)$ that has $\varphi(t_1)=\varphi(t_2)$. In the last three rows, we see that there are homomorphisms with $\varphi(t_1) \neq \varphi(t_2)$ and $\textrm{Color}_A(T_{(2,8)},GF(5)_\varphi) \ncong \textrm{Color}_A(W,GF(5)_\varphi)$.

\begin{table} [bht]
\centering
\begin{tabular} {cccc}
$\varphi(t_1)$ & $\varphi(t_2)$ & $j_0(T_{(2,8)})$ & $j_0(W)$ \\ [5 pt]
1 & 1 & 2 & 2 \\
2 & 2 & 1 & 1 \\
3 & 3 & 1 & 1 \\
4 & 4 & 1 & 1 \\
1 & 2 & 2 & 2 \\
1 & 3 & 2 & 2 \\
1 & 4 & 2 & 2 \\
2 & 3 & 2 & 1 \\
2 & 4 & 2 & 1 \\
3 & 4 & 2 & 1
\end{tabular}
\caption{Values of $j_0$ for homomorphisms $\varphi:\Lambda_2 \to GF(5)$.}
\label{tabl2}
\end{table}

The links $T_{(2,8)}$ and $W$ are of interest because of the fact that for every abelian group $A$, they have isomorphic groups of Fox colorings in $A$. This fact was verified using Goeritz matrices in \cite[Section 6]{Tcol}, but we can also deduce it from Corollary \ref{maincor3} below, because the homomorphism $\varphi:\Lambda_2 \to \mathbb{Z}$ defined by $\varphi(t_1)=\varphi(t_2)=-1$ has $\varphi(E_j(T_{(2,8)}))=\varphi(E_j(W))$ $\forall j$.

\section{A non-invertible link}

The Laurent polynomial ring $\Lambda_{\mu}=\mathbb{Z}[t_1^{\pm1},\dots,t_{\mu}^{\pm1}]$ has an automorphism given by $t_i \mapsto t_{i}^{-1}$ $\forall i$. This automorphism is sometimes called \emph{conjugation}, and denoted by an overline. Here are two important properties of conjugation.
\begin{enumerate}
\item Let $L^{inv}$ be the \emph{inverse} of an oriented link $L$, obtained by reversing the orientation of every component of $L$. Then $E_j(L)=\overline{E_j(L^{inv})}$ $\forall j$.
\item If $K$ is a knot then $E_j(K)=\overline{E_j(K)}$ $\forall j$.
\end{enumerate}

To verify property 1, let $D$ be a diagram of $L$ and let $D^{inv}$ be the diagram of $L^{inv}$ obtained from $D$ by reversing the orientation of every component. The effect of the orientation reversals is to interchange the indices of the arcs $a_2$ and $a_3$ at every crossing as indicated in Figure \ref{crossfig}. Observe that the effect of (a) interchanging $a_2$ and $a_3$ at every crossing and (b) replacing every $t_i$ with $t_{i}^{-1}$ in the resulting matrix is the same as the effect of (c) multiplying the $a$ column of $M(D)$ by $t_{\kappa(a)}$ for each $a \in A(D)$ and (d) multiplying the $c$ row of $M(D)$ by $-t_{\kappa(a_1)}^{-1}t_{\kappa(a_2)}^{-1}$ for each crossing as indicated in Figure \ref{crossfig}. Property 1 follows because operations (c) and (d) involve multiplying rows and columns by units of $\Lambda_{\mu}$, and hence do not affect the elementary ideals.

Verifying property 2 is more difficult; see \cite[Chapter IX]{CF}.

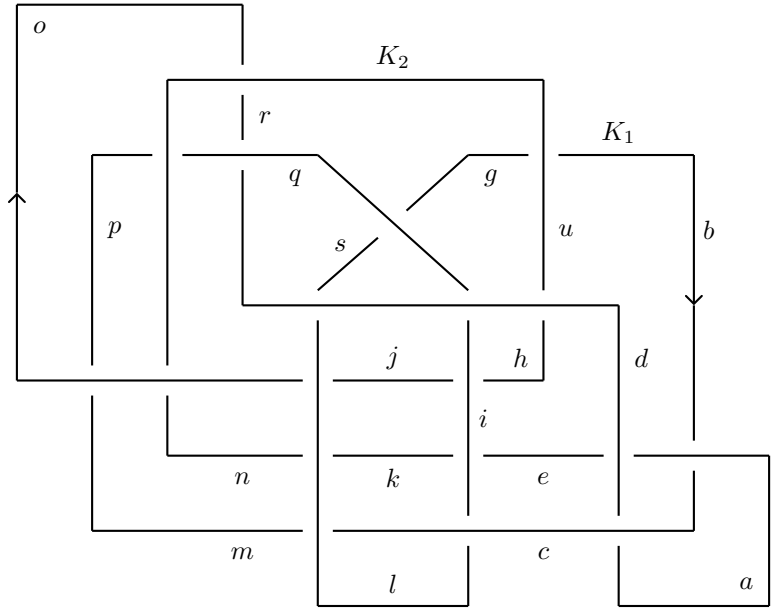
\begin{figure} [bht]
\centering
\begin{tikzpicture} [scale=1.0]
\draw [->] [>=angle 90] [thick] (0,3) -- (0,5.5);
\draw [thick] (0,8) -- (0,5.5);
\draw [thick] (0,8) -- (3,8);
\draw [thick] (3.8,3) -- (0,3);
\draw [thick] (3,7.2) -- (3,8);
\draw [thick] (3,6.2) -- (3,6.8);
\draw [thick] (3,5.8) -- (3,4);
\draw [thick] (8,4) -- (3,4);
\draw [thick] (8,4) -- (8,1.2);
\draw [thick] (8,0) -- (8,0.8);
\draw [thick] (10,0) -- (8,0);
\draw [thick] (10,0) -- (10,2);
\draw [thick] (10,2) -- (8.2,2);
\draw [thick] (7.8,2) -- (6.2,2);
\draw [thick] (5.8,2) -- (4.2,2);
\draw [thick] (3.8,2) -- (2,2);
\draw [thick] (2,2.8) -- (2,2);
\draw [thick] (2,3.2) -- (2,7);
\draw [thick] (7,7) -- (2,7);
\draw [thick] (7,7) -- (7,4.2);
\draw [thick] (7,3.8) -- (7,3);
\draw [thick] (6.2,3) -- (7,3);
\draw [thick] (4.2,3) -- (5.8,3);
\draw [thick] (4,0) -- (4,3.8);
\draw [thick] (4,0) -- (6,0);
\draw [thick] (6,0.8) -- (6,0);
\draw [thick] (6,1.2) -- (6,3.8);
\draw [thick] (1,1) -- (3.8,1);
\draw [thick] (9,1) -- (4.2,1);
\draw [thick] (9,1) -- (9,1.8);
\draw [->] [>=angle 90] [thick] (9,6) -- (9,4);
\draw [thick] (9,4) -- (9,2.2);
\draw [thick] (7.2,6) -- (9,6);
\draw [thick] (6.8,6) -- (6,6);
\draw [thick] (5.18,5.26) -- (6,6);
\draw [thick] (4.8,4.9) -- (4,4.2);
\draw [thick] (2.2,6) -- (4,6);
\draw [thick] (6,4.2) -- (4,6);
\draw [thick] (1,6) -- (1.8,6);
\draw [thick] (1,6) -- (1,3.2);
\draw [thick] (1,2.8) -- (1,1);
\node at (5,7.3) {$K_2$};
\node at (8,6.3) {$K_1$};
\node at (9.7,0.3) {$a$};
\node at (9.2,5) {$b$};
\node at (7,0.7) {$c$};
\node at (8.3,3.3) {$d$};
\node at (7,1.7) {$e$};
\node at (6.3,5.7) {$g$};
\node at (6.7,3.3) {$h$};
\node at (6.2,2.5) {$i$};
\node at (5,3.3) {$j$};
\node at (5,1.7) {$k$};
\node at (5,0.3) {$l$};
\node at (3,0.7) {$m$};
\node at (3,1.7) {$n$};
\node at (0.3,7.7) {$o$};
\node at (1.3,5) {$p$};
\node at (3.7,5.7) {$q$};
\node at (3.3,6.5) {$r$};
\node at (4.3,4.8) {$s$};
\node at (7.3,5) {$u$};
\end{tikzpicture}
\caption{Turaev's non-invertible link, $T$.}
\label{linkfig}
\end{figure}

Properties 1 and 2 indicate that the elementary ideals cannot detect non-invertibility of knots. However the elementary ideals can sometimes detect non-invertibility of links. An example is the two-component link $T$ pictured in Figure \ref{linkfig}, which was discussed by Turaev \cite{T2}. With the indicated component indices and orientations, $T$ has the elementary ideals $E_3(T)=\Lambda_2$ and $E_2(T)=(t_1-3,t_2-1,7)$. (We do not present detailed calculations.) Notice that if $\varphi:\Lambda_2 \to GF(7)$ is the ring homomorphism with $\varphi(t_1)=3$ and $\varphi(t_2)=1$ then $\varphi(E_2(T)) = 0$ but $\varphi(\overline{E_2(T)})$ includes the nonzero element $\varphi(t_{1}^{-1}-3)=5-3=2$. It follows that $E_2(T) \neq E_2(T^{inv})$, so $T$ is not invertible.

Corollary \ref{maincor2} tells us that multivariate Alexander colorings detect the non-invertibility of $T$: the dimension of $\mathrm{Color}_A(D,GF(7)_\varphi)$ over $GF(7)$ is $3$, but the dimension of $\mathrm{Color}_A(D^{inv},$ $GF(7)_\varphi)$ is no more than $2$. We leave it to the reader to verify the following explicit descriptions of these spaces.

\begin{itemize}
\item Every $f \in \mathrm{Color}_A(D,GF(7)_\varphi)$ is given by arbitrary values of $f(a),f(b)$ and $f(i)$ in $GF(7)$, with $f(c)=f(b)-2f(a)$, $f(d)=f(j)=f(k)=-2f(a)$, $f(a)=f(e)=f(h)=f(n)=f(o)=f(r)=f(u)$, $f(g)=2f(a)+f(b)$, $f(l)=4f(a)-2f(b)+3f(i)$, $f(m)=f(i)$, $f(p)=2f(a)+f(i)$, $f(q)=4f(a)+f(i)$ and $f(s)=f(a)-2f(b)+3f(i)$.
\item Every $f \in \mathrm{Color}_A(D^{inv},GF(7)_\varphi)$ is given by arbitrary values of $f(b)$ and $f(i)$ in $GF(7)$, with $f(a)=f(d)=f(e)=f(h)=f(j)=f(k)=f(n)=f(o)=f(r)=f(u)=0$, $f(c)=f(g)=f(b)$, $f(l)=f(s)=3f(b)+5f(i)$, and $f(m)=f(p)=f(q)=f(i)$.
\end{itemize}

\section{Principal ideal domains}

The special theory of modules over principal ideal domains is explained in many algebra books, like \cite{Ja, La, Se}. We summarize the ideas briefly.

Suppose $R$ is a principal ideal domain and $X$ is an $m \times n$ matrix with entries from $R$. Define the elementary ideals $E_j(X)$ as follows: if $j \geq n$, then $E_j(X)=R$; if $n>j\geq \mathrm{max}\{0,n-m\}$, then $E_j(X)$ is the ideal of $R$ generated by the determinants of $(n-j) \times (n-j)$ submatrices of $X$; and if $j<\mathrm{max}\{0,n-m\}$, then $E_j(X)=(0)$. As $R$ is a principal ideal domain, for each integer $j$ there is an $e_j(X) \in R$ such that $E_j(X)$ is the principal ideal generated by $e_j(X)$. Determinants satisfy the Laplace expansion property, so these elements $e_j(X)$ form a sequence of divisors: $e_{j+1}(X) \mid e_j(X)$ $\forall j$. The quotients $d_j(X)=e_j(X)/e_{j+1}(X)$ are the \emph{invariant factors} of $X$. Like the $e_j$, the $d_j$ are well-defined only up to associates, i.e. the principal ideals $(d_j(X))$ are invariants of $X$, but the particular elements $d_j(X)$ are not. The invariant factors also form a sequence of divisors: $d_{j+1}(X) \mid d_j(X)$ $\forall j$. The \emph{Smith normal form} of $X$ is the $m \times n$ matrix obtained from the diagonal matrix

\begin{equation*}%
\begin{pmatrix}
d_{0}(X) & 0 & 0 & 0\\
0 & d_{1}(X) & 0 & 0\\
0 & 0 & \ddots & 0\\
0 & 0 & 0 & d_{n-1}(X)
\end{pmatrix}
\end{equation*}
by adjoining $m-n$ rows of zeroes if $n<m$, and removing $n-m$ rows of zeroes if $n>m$.

The Smith normal form of $X$ is equivalent to $X$, i.e., there are invertible matrices $P,Q$ such that $PXQ$ is equal to the Smith normal form of $X$. It follows that if $X$ is a presentation matrix for the $R$-module $M$, then the Smith normal form of $X$ is also a presentation matrix for $M$. That is, if $X$ is a presentation matrix for $M$ then
\begin{equation}
\label{sumform}
M \cong \bigoplus_{j=0}^{n-1} \text{ } R/(d_j(X)).
\end{equation}
The fact that the $d_j(X)$ form a sequence of divisors implies that $(d_j(X)) \subseteq (d_{j+1}(X))$ $\forall j$. In particular, if $(d_i(X))=R$ then $(d_j(X))=R$ $\forall j\geq i$. Notice that values of $j$ with $(d_j(X))=R$ contribute nothing of significance to the direct sum of (\ref{sumform}).

Using Lemmas \ref{adj} and \ref{pres}, we deduce the following theorem from (\ref{sumform}).


\begin{theorem}
\label{main3}
Let $R$ be a principal ideal domain, and let $\varphi:\Lambda_{\mu} \to R$ be a homomorphism of rings with unity. Suppose $D$ is a diagram of a $\mu$-component link $L$, and $d_0,d_1,\dots,d_{|A(D)|-1}$ are the invariant factors of $\varphi(M(D))$. Then for any $R$-module $M$,
\begin{equation*}
\mathrm{Color}_A(D,M_\varphi) \cong \bigoplus_{j=0}^{|A(D)|-1} \mathrm{Hom}_R(R/(d_j),M)_\varphi.
\end{equation*}
\end{theorem}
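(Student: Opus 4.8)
The plan is to follow exactly the route indicated just before the statement: chain together Lemma~\ref{adj}, Lemma~\ref{pres}, and the Smith-normal-form decomposition~(\ref{sumform}), and then commute $\mathrm{Hom}$ past a finite direct sum. First I would apply Lemma~\ref{adj} with the given homomorphism $\varphi:\Lambda_{\mu}\to R$ and the $R$-module $M$, obtaining
\[
\mathrm{Color}_A(D,M_\varphi)\cong\mathrm{Hom}_R(R_\varphi\otimes_{\Lambda_{\mu}}M_A(L),M)_\varphi.
\]
Next, by Lemma~\ref{pres} the matrix $\varphi(M(D))$ is a presentation matrix for the $R$-module $N:=R_\varphi\otimes_{\Lambda_{\mu}}M_A(L)$, and this matrix has $n=|A(D)|$ columns. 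Since $R$ is a principal ideal domain, the discussion leading to~(\ref{sumform}) applies with $X=\varphi(M(D))$, giving an isomorphism of $R$-modules $N\cong\bigoplus_{j=0}^{|A(D)|-1}R/(d_j)$, where the $d_j$ are precisely the invariant factors of $\varphi(M(D))$ named in the statement.

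Then I would substitute this decomposition into the $\mathrm{Hom}$ and invoke the standard fact that $\mathrm{Hom}_R(-,M)$ carries a finite direct sum to a finite direct sum, so that $\mathrm{Hom}_R\!\big(\bigoplus_{j}R/(d_j),M\big)\cong\bigoplus_{j}\mathrm{Hom}_R(R/(d_j),M)$ as $R$-modules. Finally, applying the pullback functor $(-)_\varphi$, which is additive and hence commutes with finite direct sums, yields
\[
\mathrm{Color}_A(D,M_\varphi)\cong\Big(\bigoplus_{j=0}^{|A(D)|-1}\mathrm{Hom}_R(R/(d_j),M)\Big)_\varphi\cong\bigoplus_{j=0}^{|A(D)|-1}\mathrm{Hom}_R(R/(d_j),M)_\varphi,
\]
which is the asserted isomorphism.

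I do not expect any genuine obstacle here; the only thing requiring care is the bookkeeping around the twisted module structure. One must observe that each isomorphism in the chain is $R$-linear (so that it persists after the functor $(-)_\varphi$ is applied) and that $(-)_\varphi$ respects finite direct sums. If one preferred to avoid the $(-)_\varphi$ manipulation, one could instead run the argument one step earlier, using the adjointness isomorphism~(\ref{isom}) to rewrite $\mathrm{Hom}_R(N,M)_\varphi$ as $\mathrm{Hom}_{\Lambda_{\mu}}(M_A(L),M_\varphi)$ and never leaving the category of $\Lambda_{\mu}$-modules; but the computation is cleaner over $R$, since that is where the Smith normal form lives. It is also worth noting, as in the remark following~(\ref{sumform}), that indices $j$ with $(d_j)=R$ contribute trivial summands $\mathrm{Hom}_R(0,M)=0$, so the displayed direct sum is effectively finite in the meaningful sense even though it is written with $|A(D)|$ terms.
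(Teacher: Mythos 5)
Your proposal is correct and follows exactly the route the paper takes: its entire proof of Theorem~\ref{main3} is the one-line remark that it follows from Lemmas~\ref{adj} and~\ref{pres} together with the Smith normal form decomposition~(\ref{sumform}), and you have simply supplied the routine details (commuting $\mathrm{Hom}_R(-,M)$ and $(-)_\varphi$ past the finite direct sum) that the paper leaves implicit.
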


The direct sum of Theorem \ref{main3} seems to vary from one diagram to another, but the invariance of the Alexander module guarantees that if $D$ and $D'$ are diagrams of the same link and $|A(D)|<|A(D')|$ then the invariant factors $d'_j$ of $\varphi(M(D'))$ with $j \geq |A(D)|-1$ all generate the same principal ideal, $(d'_j)=R$. It follows that these invariant factors contribute nothing of significance to the direct sum of Theorem \ref{main3}.

\begin{corollary}
\label{maincor3}
Let $R$ be a principal ideal domain, let $\varphi:\Lambda_{\mu} \to R$ be a homomorphism of rings with unity, and let $M$ be an $R$-module. Then for any diagram $D$ of a $\mu$-component link $L$, the $\Lambda_{\mu}$-module $\mathrm{Color}_A(D,M_\varphi)$ is determined up to isomorphism by $M$ and the images under $\varphi$ of the elementary ideals of $L$.
\end{corollary}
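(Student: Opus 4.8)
The plan is to deduce Corollary \ref{maincor3} from Theorem \ref{main3} by showing that the family of principal ideals $\{(d_j)\}_{j \geq 0}$ attached to $\varphi(M(D))$ depends only on $M$ and on the ideals $\varphi(E_j(L)) \subseteq R$, independently of the chosen diagram $D$. Since Theorem \ref{main3} expresses $\mathrm{Color}_A(D,M_\varphi)$ as $\bigoplus_{j=0}^{|A(D)|-1} \mathrm{Hom}_R(R/(d_j),M)_\varphi$, and each summand $\mathrm{Hom}_R(R/(d_j),M)$ is clearly determined up to isomorphism by the ideal $(d_j)$ together with $M$, it suffices to control the sequence of invariant-factor ideals.

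First I would observe that, by Lemma \ref{pres}, $\varphi(M(D))$ is a presentation matrix for the $R$-module $R_\varphi \otimes_{\Lambda_{\mu}} M_A(L)$; hence the elementary ideals $E_j(\varphi(M(D)))$ computed from this matrix are, up to the harmless shift coming from $|A(D)|$, invariants of that $R$-module and not of the diagram. More concretely, I would use the functoriality of determinants already noted in the proof of Corollary \ref{lastcor}: $\varphi(\det X) = \det(\varphi(X))$ for every square submatrix $X$ of $M(D)$, so $E_j(\varphi(M(D))) = \varphi(E_{|A(D)|-(|A(D)|-j)}(L)) \cdots$ — more precisely, the ideal of $R$ generated by the $k \times k$ minors of $\varphi(M(D))$ equals $\varphi$ applied to the ideal generated by the $k \times k$ minors of $M(D)$, which is $\varphi(E_{|A(D)|-k}(L))$. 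Thus for each $j$ with $0 \leq j < |A(D)|$ we have $E_j(\varphi(M(D))) = \varphi(E_j(L))$ (interpreting $\varphi(E_j(L)) = (0)$ when $j < |A(D)| - |C(D)|$, consistent with the definition of $E_j(X)$ for matrices), and for $j \geq |A(D)|$ both sides are $R$. Since $R$ is a principal ideal domain, each $E_j(\varphi(M(D)))$ is principal, generated by $e_j := e_j(\varphi(M(D)))$, and $(e_j) = \varphi(E_j(L))$; consequently $(d_j) = (e_j : e_{j+1}) $, which in a PID is the unique principal ideal $(d_j)$ with $(e_j) = (d_j)(e_{j+1})$, and is therefore determined by the two ideals $\varphi(E_j(L))$ and $\varphi(E_{j+1}(L))$.

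Next I would handle the diagram-dependence of the \emph{length} of the direct sum. If $D$ and $D'$ are two diagrams of $L$ with, say, $|A(D)| \leq |A(D')|$, then for $0 \leq j < |A(D)|$ the ideals $(d_j)$ and $(d'_j)$ agree because both equal the ideal determined by $\varphi(E_j(L))$ and $\varphi(E_{j+1}(L))$ as above; and for $|A(D)| \leq j < |A(D')|$ we have $E_j(\varphi(M(D'))) = \varphi(E_j(L)) = R$ since $j \geq |A(D)|$ forces $E_j(L) = \Lambda_\mu$, whence $(d'_j) = R$ and $\mathrm{Hom}_R(R/(d'_j),M) = 0$. So the extra summands in the longer diagram's expansion all vanish, and the two expansions give isomorphic modules. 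This is exactly the remark already made in the excerpt immediately before the corollary, which I would simply cite. Finally, $\mathrm{Hom}_R(R/(d_j),M)_\varphi$ depends only on the $\Lambda_\mu$-module $\mathrm{Hom}_R(R/(d_j),M)$, hence only on the ideal $(d_j)$ and on $M$; assembling these observations yields that $\mathrm{Color}_A(D,M_\varphi)$ is determined up to $\Lambda_\mu$-module isomorphism by $M$ and the ideals $\varphi(E_j(L))$, $j \geq 0$.

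The only real subtlety — the step I would be most careful about — is the bookkeeping identifying the elementary ideals of the \emph{matrix} $\varphi(M(D))$ (indexed so that $E_k(X)$ is generated by $(n-k) \times (n-k)$ minors, with the $\max\{0,n-m\}$ truncation) with the images $\varphi(E_j(L))$ of the \emph{link} elementary ideals (which use the same convention relative to an Alexander matrix); one must check the index ranges match up, in particular that the truncation-to-zero for small $j$ is compatible on both sides, and that passing between diagrams with different $|C(D)|$ does not disturb this. This is routine but must be stated cleanly, since it is the hinge that converts "invariants of the matrix $\varphi(M(D))$" into "invariants of the link $L$ via $\varphi$." Everything else is formal manipulation of Theorem \ref{main3} plus the elementary fact that in a PID a principal ideal $(d)$ with $(d)(b) = (a)$ is uniquely determined by $(a)$ and $(b)$.
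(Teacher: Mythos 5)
Your proposal is correct and follows essentially the same route as the paper, which also deduces the corollary from Theorem \ref{main3} together with the remark that invariant factors with index $\geq |A(D)|-1$ generate $R$ and contribute nothing; you simply make explicit the step the paper leaves implicit, namely that functoriality of determinants identifies the elementary ideals of $\varphi(M(D))$ with the ideals generated by the $\varphi(E_j(L))$, so that the $(d_j)$ are link invariants. One small slip: your claim that in a PID the ideal $(d_j)$ is \emph{uniquely} determined by $(e_j)=(d_j)(e_{j+1})$ fails when $e_{j+1}=0$ (then any $(d_j)$ satisfies the equation), and this case genuinely occurs --- e.g.\ for $9_{46}$ with $\varphi(t)=-1$ into $GF(3)$ one has $e_0=e_1=e_2=0$; it is repaired immediately by the divisibility chain $d_{j+1}\mid d_j$, which forces $d_j=0$ whenever some $d_i=0$ with $i\geq j$, hence whenever $e_{j+1}=0$.
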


The examples of Section 3 show that if we replace ``elementary ideals'' by ``Alexander polynomials'' in Corollary \ref{maincor3} then the resulting statement is false, in general.

Theorem \ref{main3} implies Theorem \ref{main2} in two different ways. (i) Suppose $F$ is the field of quotients of a principal ideal domain $R$. (Perhaps $F=R$.) If $M$ is a vector space over $F$, then $\mathrm{Hom}_R(R/(d_j),M)_\varphi$ is isomorphic to either $(0)$ (if $d_j \neq 0$) or $M_\varphi$ (if $d_j=0$). (ii) Suppose $I$ is a maximal ideal of a principal ideal domain $R$, $F=R/I$ and $M$ is a vector space over $F$. Then $\mathrm{Hom}_R(R/(d_j),M)_\varphi$ is isomorphic to either $(0)$ (if $d_j \notin I$) or $M_\varphi$ (if $d_j \in I$).

We close with thanks to an anonymous reader, who provided helpful comments on the first version of the paper.


\begin{thebibliography}{99}

\bibitem {B} Y. Bae, Coloring link diagrams by Alexander quandles, J. Knot Theory Ramifications 21 (2012), 1250094.

\bibitem {BZ} G. Burde and H. Zieschang, Knots, 2nd edn. deGruyter Studies in Mathematics, Vol. 5 (Walter de Gruyter, Berlin and New York, 2003).

\bibitem{Cha} J. C. Cha and C. Livingston, LinkInfo: Table of Knot Invariants, http://www.indiana.edu/~linkinfo, accessed April, 2018. 

\bibitem {CF}R. H. Crowell and R. H. Fox, Introduction to Knot Theory, Graduate Texts in Mathematics, Vol. 57 (Springer, New York, 1977).

\bibitem {EN} M. Elhamdadi and S. Nelson, Quandles, Student Mathematical Library, Vol. 74 (Amer. Math. Soc., Providence, R.I., 2015).

\bibitem {F}R. H. Fox, A quick trip through knot theory, in Topology of 3-Manifolds and Related Topics (Proc. The Univ. of Georgia Institute, 1961) (Prentice-Hall, Englewood Cliffs, N.J., 1962), pp. 120-167.

\bibitem {HHO} C. Hayashi, M. Hayashi and K. Oshiro, On linear $n$-colorings for knots, J. Knot Theory Ramifications 21 (2012), 1250123.

\bibitem {H} J. A. Hillman, Algebraic Invariants of Links, 2nd edn. Series on Knots and Everything, Vol. 52 (World Scientific, Singapore, 2012).

\bibitem {I} A. Inoue, Quandle homomorphisms of knot quandles to Alexander quandles, J. Knot Theory Ramifications 10 (2001), 813-821.

\bibitem {I2} A. Inoue, Knot quandles and infinite cyclic covering spaces, Kodai Math. J. 33 (2010), 116-122.

\bibitem {Ja}N. Jacobson, Basic Algebra I, 2nd edn. (W. H. Freeman and Company, New York, 1985).

\bibitem {J} D. Joyce, A classifying invariant of knots, the knot quandle, J. Pure Appl. Algebra 23 (1982), 37-65.

\bibitem {KL} L. H. Kauffman and P. Lopes, Colorings beyond Fox: the other linear Alexander quandles, Linear Alg. Appl. 548 (2018), 221-258.

\bibitem {La} S. Lang, Algebra, 3rd edn. (Addison-Wesley, Reading, MA, 1993).

\bibitem {L} R. A. Litherland, Quadratic quandles and their link invariants, arXiv: math/0207099.


\bibitem {Ma} V. O. Manturov and D. P. Ilyutko, Virtual Knots: The State of the Art, Series on Knots and Everything, Vol. 51 (World Scientific, Singapore, 2013).

\bibitem {M} S. V. Matveev, Distributive groupoids in knot theory, Mat. Sb. (N.S.) 119 (1982), 78-88.

\bibitem {N} T. Nosaka, Twisted cohomology pairings of knots I; diagrammatic computation, Geom. Dedicata 189 (2017), 139-160.

\bibitem {Se} D.\ Serre, Matrices, 2nd edn. Graduate Texts in Mathematics, Vol. 21 (Springer, New York, 2010).

\bibitem {STW} D. A. Smith, L. Traldi and W. Watkins, A note on Dehn colorings and invariant factors, J. Knot Theory Ramifications, to appear.

\bibitem{Tcol} L. Traldi, Link colorings and the Goeritz matrix, J. Knot Theory Ramifications 26 (2017), 1750045.

\bibitem{T2} V. G. Turaev, Elementary ideals of links and manifolds: symmetry and asymmetry, Algebra i Analiz 1 (1989), 223-232; translation in Leningrad Math. J. 1 (1990), 1279-1287.

\end{thebibliography}
\end{document}